\documentclass[final]{siamltex}

\usepackage{epsfig}
\usepackage{amsmath,amssymb}
\usepackage {graphicx}
\usepackage{amsmath,amssymb}

\newtheorem{example}{Example}
\newtheorem{remark}{Remark}[section]


\title{Refined Bounds on the Number of Distinct Eigenvalues of a Matrix After Perturbation}

\author{Yunjie Wang\thanks{School of Mathematics, China University of Mining and Technology \&
                           Kewen Institute, Jiangsu Normal University, Xuzhou, 221116,  Jiangsu, P.R. China. E-mail: {\tt wangyunjiemath@163.com}.}
       \and Gang Wu\thanks{Corresponding author. School of Mathematics,
China University of Mining and Technology, Xuzhou, 221116, Jiangsu, P.R. China.
E-mail: {\tt gangwu@cumt.edu.cn} and {\tt gangwu76@126.com}. This author is
supported by the National Science Foundation of China under grant 11371176, the Natural Science Foundation of Jiangsu
Province under grant BK20171185, and the Talent Introduction Program of China
University of Mining and Technology.}
}

\begin{document}
\maketitle

\begin{abstract}
The eigenproblem of low-rank updated matrices are of crucial importance in many applications. Recently, an upper bound on the number of distinct eigenvalues of a perturbed matrix was established. The result can be applied to estimate the number of Krylov iterations required for solving a perturbed
linear system.
In this paper, we revisit this problem and establish some refined bounds. Some {\it a prior} upper bounds that only rely on the information of the matrix in question and the low-rank update are provided. Examples show the superiority of our theoretical results over the existing ones.
The number of distinct singular values of a matrix after perturbation is also investigated.
\end{abstract}
\begin{keywords}
Eigenproblem, Low-rank update, Algebraic multiplicity, Geometric multiplicity, Defectivity, Derogatory matrix.
\end{keywords}
\begin{AMS}
65F15, 65F10, 15A18.
\end{AMS}

\pagestyle{myheadings} \thispagestyle{plain} \markboth{Y. WANG AND G. WU}{\sc Refined Bounds on the Number of Distinct Eigenvalues}

\date{ }%

\section{Introduction}
\setcounter{equation}{0}

Low-rank updated matrices are of great importance in real applications \cite{Chu,G. H. Golub}. Specifically, the eigenproblem
with respect to a low-rank updated matrix is of great interest \cite{J. R. Bunch,G. H. Golub,GV,Gu,J. H. Wilkinson,Wu,WW}.
However, most work is devoted to discussing the case of symmetric
low-rank perturbations \cite{J. R. Bunch,G. H. Golub,Gu}, the Jordan form \cite{Moro,Wu}, and the characteristic polynomial  of a low-rank updated matrix \cite{WW}.
Recently, Farrell \cite{P. E. Farrell} gave an upper bound for the number of distinct eigenvalues of arbitrary matrices perturbed by updates of arbitrary rank, which is the central result in \cite{P. E. Farrell}. This result can be utilized to estimate the number of Krylov iterations required for solving a perturbed linear system. Very recently, by separating the spectra into two disjoint sets, Xu presented an improved upper bound in \cite{X.F. Xu}.

Let us briefly introduce some definitions and notations that
will be used in this paper.
Given a matrix $M\in \mathbb{C}^{n\times n}, $ let $\Lambda(M)$ be the set of all distinct eigenvalues of $M$, and let $|\Lambda(M)|$ be the cardinality of the set $\Lambda(M)$.
The {algebraic multiplicity} $m_a (M,\lambda)$ of an eigenvalue $\lambda\in\Lambda(M)$ is the multiplicity of $\lambda$ as a zero of the characteristic
polynomial of $M$. The dimension of the eigenspace of $M$ corresponding to $\lambda$, denoted by $m_g (M,\lambda)$, is called its {geometric multiplicity}.
Recall that the geometric multiplicity of an eigenvalue will never be
greater than its algebraic multiplicity. If $m_g (M,\lambda)<m_a (M,\lambda)$ for some $\lambda\in \Lambda(M)$, then $M$
is called a {defective} matrix. If $m_g (M,\lambda)=m_a (M,\lambda)$ for all $\lambda\in \Lambda(M)$, then $M$ is said to be
{nondefective} or {diagonalizable}. Clearly, $m_g (M,\lambda)\geq 1$ for
all $\lambda\in \Lambda(M)$. If $m_g (M,\lambda)\equiv 1$ for all $\lambda\in \Lambda(M)$, then $M$ is said to be {nonderogatory}; otherwise,
it is referred to as a {derogatory} matrix. We denote by $r(M)$ the rank of the matrix $M$. Let $S_1$ and $S_2$ be two sets, then $S_1\backslash S_2$ stands for the complement of $S_2$ with respect to $S_1$.

The following definition defines the defectivity of an eigenvalue and that of a matrix:
\begin{definition} {\cite{P. E. Farrell}}
The defectivity of an eigenvalue $d(M,\lambda)\geq 0$ is the difference between its algebraic and geometric multiplicities
\begin{equation}\label{211}
d(M,\lambda)=m_a (M,\lambda)-m_g (M,\lambda).
\end{equation}%
And the defectivity of a matrix is the sum of the  defectivities of its eigenvalues
\begin{equation}\label{2.2}
d(M)=\sum\limits_{\lambda\in \Lambda(M)} d(M,\lambda).
\end{equation}%
\end{definition}

As $m_a (M,\lambda)\geq m_g (M,\lambda)$ for all $\lambda\in \Lambda(M)$, we have $d(M)\geq d(M,\lambda)\geq 0$.
Indeed, the defectivity of a matrix can be considered as a quantitative measure of its nondiagonalizability \cite{P. E. Farrell}. Note that a matrix $M$ is diagonalizable if and only if $d(M) = 0$. For completeness, if $\lambda \not\in \Lambda(M)$, we simply set $m_a (M,\lambda)=0$ in this paper. Therefore,
\begin{equation}\label{2.3}
\lambda \not\in \Lambda(M)\Longleftrightarrow m_a (M,\lambda)=0\Longleftrightarrow m_g (M,\lambda)=0.
\end{equation}%
The following result is the central theorem of \cite{P. E. Farrell}. It relates the number of distinct eigenvalues of a matrix to the priori number of distinct eigenvalues.
\begin{theorem}\label{Thm211}\cite[Theorem 1.3]{P. E. Farrell}
Let $A,B\in\mathbb{C}^{n\times n}$. If $C=A+B$, then
\begin{equation}\label{1.1}
|\Lambda(C)|\leq (r(B)+1)|\Lambda(A)|+d(A).
\end{equation}
\end{theorem}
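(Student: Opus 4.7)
The plan is to study the rank of $p(C)$, where $p(\mu)=\prod_{\lambda\in\Lambda(A)}(\mu-\lambda)$ is the squarefree part of the characteristic polynomial of $A$. Note that $\deg p=|\Lambda(A)|$ and that $p$ has only simple roots, one at each distinct eigenvalue of $A$. I would combine an upper bound on $r(p(C))$ with an exact formula for $\dim\ker(p(C))$; the rank--nullity identity for $p(C)$ then packages everything into the claimed inequality.

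For the upper bound on $r(p(C))$, I would first show $r(p(A))=d(A)$ by passing to the Jordan form of $A$: for each Jordan block $J_s(\lambda)$ of $A$, exactly one factor of $p(J_s(\lambda))$, namely $J_s(\lambda)-\lambda I$, is singular with rank $s-1$, while the other factors $J_s(\lambda)-\lambda' I$ with $\lambda'\neq\lambda$ are upper triangular with nonzero diagonal and hence invertible. Summing $s-1$ over all Jordan blocks of $A$ gives $\sum_{\lambda}(m_a(A,\lambda)-m_g(A,\lambda))=d(A)$. Next, using the telescoping identity $C^k-A^k=\sum_{j=0}^{k-1}C^{j}BA^{k-1-j}$ inside $p(C)-p(A)=\sum_k c_k(C^k-A^k)$ and regrouping, one writes $p(C)-p(A)=\sum_{j=0}^{\deg p-1}C^{j}B\,Q_j(A)$ for suitable polynomials $Q_j$. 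Each summand factors through $B$ and therefore has rank at most $r(B)$, so $r(p(C)-p(A))\le r(B)|\Lambda(A)|$. Subadditivity of rank then gives $r(p(C))\le r(B)|\Lambda(A)|+d(A)$.

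On the nullity side, because $p$ has only simple roots, every Jordan block of $C$ at an eigenvalue $\nu\in\Lambda(A)\cap\Lambda(C)$ contributes exactly one dimension to $\ker(p(C))$, while blocks of $C$ at $\nu\notin\Lambda(A)$ contribute nothing. Hence $\dim\ker(p(C))=\sum_{\nu\in\Lambda(A)\cap\Lambda(C)}m_g(C,\nu)$. Combining this with the rank--nullity identity $n=r(p(C))+\dim\ker(p(C))$, and using $m_a(C,\nu)\ge m_g(C,\nu)$ together with $m_a(C,\nu)\ge 1$ for every $\nu\in\Lambda(C)\setminus\Lambda(A)$, a short counting argument gives $|\Lambda(C)\setminus\Lambda(A)|\le r(B)|\Lambda(A)|+d(A)$. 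The theorem follows by adding $|\Lambda(A)\cap\Lambda(C)|\le|\Lambda(A)|$ to both sides.

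The main obstacle is the choice of auxiliary polynomial. Taking $p$ to be the minimal polynomial of $A$ would force $\deg p$ to absorb $d(A)$ and would produce only the weaker estimate $(r(B)+1)|\Lambda(A)|+r(B)\,d(A)$. Working with the squarefree $p$ decouples $r(p(A))$, which contributes exactly $d(A)$, from $r(p(C)-p(A))$, which scales with $\deg p=|\Lambda(A)|$, yielding an additive $+d(A)$ correction rather than a multiplicative $+r(B)d(A)$ one. Pinning down that $r(p(A))$ equals $d(A)$ exactly, rather than merely being bounded by it, is the technical heart of the argument.
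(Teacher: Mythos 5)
Your proposal is correct: the identity $r(p(A))=d(A)$ for the squarefree polynomial $p(\mu)=\prod_{\lambda\in\Lambda(A)}(\mu-\lambda)$ (via the Jordan form), the telescoping bound $r\big(p(C)-p(A)\big)\le r(B)\deg p=r(B)|\Lambda(A)|$, and the rank--nullity count all check out and together yield \eqref{1.1}; only the inequality $r(p(A))\le d(A)$ is actually needed, though the equality you establish is fine. Be aware, however, that the paper does not prove Theorem \ref{Thm211} itself --- it imports it from Farrell's paper, and your argument is essentially a reconstruction of Farrell's original proof, whose technical core is exactly the lemma $r(p(A))=d(A)$. The route this paper takes to rederive and sharpen the bound is genuinely different: in Lemma \ref{Thm3.1} the spectra are partitioned into $S_1=\Lambda(A)\cap\Lambda(C)$, $S_2=\Lambda(C)\setminus S_1$, $S_3=\Lambda(A)\setminus S_1$, and the only perturbation input is the eigenvalue-wise inequality $m_g(A,\lambda)-m_g(C,\lambda)\le r(B)$, i.e.\ rank subadditivity applied to $\lambda I-A$ and $\lambda I-C$; no auxiliary polynomial appears. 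The comparison is instructive: the slack you throw away in your final counting step --- namely $\sum_{\nu\in S_1}d(C,\nu)+\sum_{\nu\in S_2}\big(m_a(C,\nu)-1\big)$ when you bound the left-hand side below by $|\Lambda(C)\setminus\Lambda(A)|$, plus $|S_3|$ when you bound $|S_1|\le|\Lambda(A)|$ --- adds up to exactly $d(C)+\sum_{\nu\in S_2}I(C,\nu)+|S_3|$, which is precisely what the paper's bound \eqref{3.1} retains (together with the further term $\sum_{\nu\in S_3}\big(r(B)-m_g(A,\nu)\big)$). So the polynomial route buys a clean, self-contained proof of \eqref{1.1}, while the multiplicity-counting route buys the strictly sharper bounds \eqref{1.2} and \eqref{3.1} at essentially no extra cost.
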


However, the upper bound \eqref{1.1} may not be sharp and even can be invalid in practice \cite{X.F. Xu}.
In order to enhance this result, the following bound was given, which is the main result of \cite{X.F. Xu}.
\begin{theorem}\label{Thm2.2}\cite[Theorem 3.1]{X.F. Xu}
Assume that $A,B\in\mathbb{C}^{n\times n}$ and let $C=A+B$, then we have
\begin{equation}\label{1.2}
|\Lambda(C)|\leq (r(B)+1)|\Lambda(A)|+d(A)-d(C).
\end{equation}
\end{theorem}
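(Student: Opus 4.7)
The plan is to analyze the polynomial $p(x)=\prod_{\lambda\in\Lambda(A)}(x-\lambda)$ of degree $|\Lambda(A)|$, whose roots are exactly the distinct eigenvalues of $A$. Evaluating $p$ on the Jordan form of $A$, each Jordan block of size $e$ for an eigenvalue $\lambda$ contributes the rank-$(e-1)$ nilpotent $(J_\lambda-\lambda I)$ multiplied by factors $(J_\lambda-\mu I)$ that are invertible on that block whenever $\mu\neq\lambda$. Summing the contributions of all Jordan blocks yields $r(p(A))=d(A)$, the identity that will hook the defectivity of $A$ to a rank statement usable against $C$.

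To control $r(p(C))$, I would bound $r(p(C)-p(A))$ by means of the telescoping identity $(A+B)^k-A^k=\sum_{j=0}^{k-1}A^j B (A+B)^{k-1-j}$. For every $1\le k\le\deg p$, the range of $(A+B)^k-A^k$ is contained in the common ambient subspace $W=\mathrm{range}(B)+A\cdot\mathrm{range}(B)+\cdots+A^{\deg p-1}\cdot\mathrm{range}(B)$, which has dimension at most $|\Lambda(A)|\,r(B)$. Hence $r(p(C)-p(A))\le|\Lambda(A)|\,r(B)$, and combining with the previous paragraph gives
\[
r(p(C))\le r(p(A))+r(p(C)-p(A))\le d(A)+|\Lambda(A)|\,r(B),
\]
so $\dim\ker p(C)\ge n-d(A)-|\Lambda(A)|\,r(B)$.

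Because $p$ has simple roots, a standard partial-fraction (Chinese Remainder) argument produces $\ker p(C)=\bigoplus_{\mu\in\Lambda(A)\cap\Lambda(C)}\ker(C-\mu I)$, whence $\sum_{\mu\in\Lambda(A)\cap\Lambda(C)} m_g(C,\mu)\ge n-d(A)-|\Lambda(A)|\,r(B)$. I would then feed this lower bound into $n=\sum_{\mu\in\Lambda(C)}m_a(C,\mu)$, using $m_a(C,\mu)=m_g(C,\mu)+d(C,\mu)$ on the intersection and the sharper $m_a(C,\mu)\ge 1+d(C,\mu)$ on the complement $\Lambda(C)\setminus\Lambda(A)$, and then collect the defectivity contributions into $d(C)=\sum_{\mu\in\Lambda(C)}d(C,\mu)$. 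The resulting inequality rearranges to $|\Lambda(C)\setminus\Lambda(A)|\le|\Lambda(A)|\,r(B)+d(A)-d(C)$, and adding the trivial $|\Lambda(A)\cap\Lambda(C)|\le|\Lambda(A)|$ produces the desired bound \eqref{1.2}. The main obstacle I anticipate is the rank estimate for $p(C)-p(A)$: it is essential to house the ranges of all the summands $(A+B)^k-A^k$ inside the single ambient subspace $W$, since naively summing rank contributions for each $k$ inflates the estimate quadratically in $|\Lambda(A)|$ and wipes out the improvement over Farrell's inequality; the fact that the $d(C)$ term surfaces only after using $m_a\ge 1+d$ on the new eigenvalues is the final subtle point that separates this proof from merely recovering Theorem~\ref{Thm211}.
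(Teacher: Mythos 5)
Your proof is correct, but it takes a genuinely different route from the paper. The paper states this bound without proof (it is quoted from Xu), and its own machinery re-derives it with an extra nonnegative correction term in Lemma~\ref{Thm3.1}: there the argument splits $\Lambda(A)\cup\Lambda(C)$ into $S_1=\Lambda(A)\cap\Lambda(C)$, $S_2=\Lambda(C)\setminus S_1$, $S_3=\Lambda(A)\setminus S_1$ and counts geometric multiplicities via Lemma~\ref{Lem3.1} together with the single-eigenvalue inequality $m_g(A,\lambda)-m_g(C,\lambda)\le r(B)$. You instead run a global polynomial-annihilator argument in the spirit of Farrell's original proof: the identities $r(p(A))=d(A)$ and $\ker p(C)=\bigoplus_{\mu\in\Lambda(A)\cap\Lambda(C)}\ker(C-\mu I)$ are standard and correctly justified, housing the ranges of all $C^k-A^k$ inside the single subspace $W=\mathrm{range}(B)+A\,\mathrm{range}(B)+\cdots+A^{|\Lambda(A)|-1}\mathrm{range}(B)$ is exactly the device needed to get the linear factor $|\Lambda(A)|\,r(B)$ rather than a quadratic one, and the final accounting with $m_a(C,\mu)\ge 1+d(C,\mu)$ on $\Lambda(C)\setminus\Lambda(A)$ is what correctly surfaces the $-d(C)$ term. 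As for what each approach buys: the paper's multiplicity-counting route localizes the slack eigenvalue by eigenvalue and therefore yields the sharper bound \eqref{3.1} essentially for free, while your route is self-contained (it does not invoke $m_g(A,\lambda)-m_g(C,\lambda)\le r(B)$ as a black box). In fact, if in your last step you keep $|\Lambda(A)\cap\Lambda(C)|=|\Lambda(A)|-|\Lambda(A)\setminus\Lambda(C)|$ instead of discarding it as $\le|\Lambda(A)|$, your argument also recovers the paper's Theorem~\ref{Cor2.4}.
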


The next result plays an important role in estimating the number of Krylov iterations after a rank one update \cite{X.F. Xu}.
\begin{theorem}\label{Thm1.3}\cite[Corollary 4.2]{X.F. Xu}
Suppose that $A \in\mathbb{C}^{n\times n}$ is diagonalizable, $r(B)=1$ and let $C=A+B$. If $C$  is also diagonalizable, then
$|\Lambda(C)|\leq 2|\Lambda(A)|.$ If $C$  is not diagonalizable, then
\begin{equation}\label{1.3}
|\Lambda(C)|\leq 2|\Lambda(A)|-1.
\end{equation}
\end{theorem}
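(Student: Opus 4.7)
The plan is to obtain Theorem \ref{Thm1.3} as an almost immediate specialization of Theorem \ref{Thm2.2}, since with $r(B)=1$ and $A$ diagonalizable the general bound \eqref{1.2} collapses to something very close to what we want.

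First I would set $r(B)=1$ in \eqref{1.2}, turning the factor $r(B)+1$ into $2$, so that
\begin{equation*}
|\Lambda(C)|\leq 2|\Lambda(A)|+d(A)-d(C).
\end{equation*}
Next I would use the hypothesis that $A$ is diagonalizable. By the definition \eqref{2.2}, this is equivalent to $d(A,\lambda)=0$ for every $\lambda\in\Lambda(A)$, hence $d(A)=0$. Plugging this in gives
\begin{equation*}
|\Lambda(C)|\leq 2|\Lambda(A)|-d(C),
\end{equation*}
which is the master inequality from which both cases of the theorem will be read off.

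For the diagonalizable case, the same argument applied to $C$ gives $d(C)=0$, and the first claim $|\Lambda(C)|\leq 2|\Lambda(A)|$ follows at once. For the non-diagonalizable case, I would argue that $d(C)\geq 1$: indeed, if $C$ is not diagonalizable, then by the definition of defectivity there must exist at least one eigenvalue $\mu\in\Lambda(C)$ with $m_g(C,\mu)<m_a(C,\mu)$, so $d(C,\mu)\geq 1$, and summing the nonnegative contributions in \eqref{2.2} yields $d(C)\geq 1$. Substituting this lower bound into the master inequality gives $|\Lambda(C)|\leq 2|\Lambda(A)|-1$, which is \eqref{1.3}.

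There is no real obstacle here: the proof is essentially a three-line specialization. The only thing one has to be careful about is the elementary observation that \emph{any} non-diagonalizable matrix has defectivity at least one, which follows directly from \eqref{211} and \eqref{2.2}. Thus the whole argument is a clean corollary of Theorem \ref{Thm2.2}, with the rank-one hypothesis making the coefficient of $|\Lambda(A)|$ equal to $2$, the diagonalizability of $A$ killing the $d(A)$ term, and the (non)diagonalizability of $C$ controlling the final $-d(C)$ correction.
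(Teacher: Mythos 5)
Your derivation is correct and is exactly the intended route: the paper states this result without proof (it is quoted from Xu's paper as a corollary of Theorem \ref{Thm2.2}), and specializing \eqref{1.2} with $r(B)=1$, $d(A)=0$ for diagonalizable $A$, and $d(C)=0$ or $d(C)\geq 1$ according to whether $C$ is diagonalizable gives precisely the two claimed bounds. The paper's own Theorem \ref{Cor2.6} is proved by the same pattern of argument, so your reasoning is consistent with how the authors handle the analogous refined statement.
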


In this paper, we show that all the results \eqref{1.1}, \eqref{1.2} and \eqref{1.3} are not sharp enough and can be overestimated in practice.
Thus, it is necessary to establish new upper bounds on this problem.
We give some refined upper bounds on the number of distinct eigenvalues of a perturbed matrix. Further, we provide some {\it prior} upper bounds that only rely on the information of $A$ and $B$.
The key is to first separate $\Lambda(A)\cup\Lambda(C)$ into three disjoint sets, and then pay special attention to the set $\Lambda(A)\setminus\Lambda(C)$, i.e., the distinct eigenvalues that are in $\Lambda(A)$ but not in $\Lambda(C)$.
Examples show the tightness of our new results, as well as their superiority over those provided in  \cite{P. E. Farrell,X.F. Xu}.
The number of distinct singular values of a matrix after perturbation is also discussed.


\section{The main results}
\setcounter{equation}{0}
In this section, we propose refined bounds on the
number of distinct eigenvalues of a perturbed matrix, which further improves the results given in Theorem \ref{Thm211}--Theorem \ref{Thm1.3}. Inspired by the definition of the derogatory index of a matrix \cite{X.F. Xu}, we can define
the derogatory of an eigenvalue as follows.
\begin{definition}
The derogatory of an eigenvalue $\lambda\in \Lambda(M)$ is defined as
\begin{equation*}
I(M,\lambda)=m_g (M,\lambda)-1.
\end{equation*}%
\end{definition}
Notice that $m_g (M,\lambda)-I(M,\lambda)=1 $ for  all $\lambda\in \Lambda(M)$, we have
\begin{equation}
|\Lambda(M)|=\sum\limits_{\lambda\in \Lambda(M)} \big(m_g (M,\lambda)-I(M,\lambda)\big). \label{21}
\end{equation}%

We are ready to prove the following two lemmas.
\begin{lemma}\label{Lem3.1}
Let $M\in \mathbb{C}^{n\times n}$ and let $S$ be a subset of $\Lambda(M)$.  Then
\begin{equation*}
\sum\limits_{\lambda\in S}m_g(M,\lambda)=n-\sum\limits_{\lambda\in \Lambda(M)\backslash S}  m_g(M,\lambda)-d(M), \label{2.1}
\end{equation*}
where $\Lambda(M)\backslash S$ denotes the complement of $S$ with respect to $\Lambda(M)$.
\end{lemma}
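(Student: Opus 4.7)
The plan is to reduce the claimed identity to the single global relation
\[
\sum_{\lambda\in\Lambda(M)}m_g(M,\lambda)+d(M)=n,
\]
after which the statement follows by merely splitting the sum over $\Lambda(M)$ into its portion on $S$ and on $\Lambda(M)\setminus S$ and moving one piece to the right-hand side. So the work is really to prove the global identity; the partitioning step is bookkeeping.

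First I would recall the standard fact that the sum of the algebraic multiplicities of the distinct eigenvalues of $M$ equals the size of the matrix:
\[
\sum_{\lambda\in\Lambda(M)}m_a(M,\lambda)=n,
\]
since the characteristic polynomial of $M$ has degree $n$ and factors completely over $\mathbb{C}$. Next, I would invoke the definition of defectivity given in \eqref{211} and \eqref{2.2}: summing $d(M,\lambda)=m_a(M,\lambda)-m_g(M,\lambda)$ over $\lambda\in\Lambda(M)$ gives
\[
d(M)=\sum_{\lambda\in\Lambda(M)}m_a(M,\lambda)-\sum_{\lambda\in\Lambda(M)}m_g(M,\lambda)=n-\sum_{\lambda\in\Lambda(M)}m_g(M,\lambda),
\]
which is precisely the global identity above.

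Finally I would write $\Lambda(M)$ as the disjoint union of $S$ and $\Lambda(M)\setminus S$, split the sum of geometric multiplicities accordingly, and solve for $\sum_{\lambda\in S}m_g(M,\lambda)$, producing the advertised formula.

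There is no genuine obstacle here: the lemma is essentially a rearrangement of the definition of $d(M)$ together with the fact that algebraic multiplicities sum to $n$. The only thing to be careful about is the convention set up in \eqref{2.3}, which guarantees that eigenvalues outside $\Lambda(M)$ contribute nothing, so that restricting the sums to $\Lambda(M)$ (rather than all of $\mathbb{C}$) is harmless.
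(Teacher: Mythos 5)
Your proposal is correct and follows essentially the same route as the paper: both derive the global identity $d(M)=n-\sum_{\lambda\in\Lambda(M)}m_g(M,\lambda)$ from $\sum_{\lambda\in\Lambda(M)}m_a(M,\lambda)=n$ together with the definitions \eqref{211} and \eqref{2.2}, and then split the sum over the disjoint union $S\cup(\Lambda(M)\backslash S)$ and rearrange. No gaps.
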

\begin{proof}
Recall that $\sum\limits_{\lambda\in \Lambda(M)} m_a (M,\lambda)=n$, and we have from \eqref{211} and \eqref{2.2} that
\begin{eqnarray*}
 d(M) &=& n-\sum\limits_{\lambda\in \Lambda(M)} m_g (M,\lambda)\\
       &=& n-\sum\limits_{\lambda\in  S} m_g (M,\lambda)-\sum\limits_{\lambda\in \Lambda(M)\backslash S} m_g (M,\lambda).
\end{eqnarray*}%
Hence,
\begin{eqnarray*}
\sum\limits_{\lambda\in S}m_g(M,\lambda)=n-\sum\limits_{\lambda\in \Lambda(M)\backslash S}  m_g(M,\lambda)-d(M).
\end{eqnarray*}
\end{proof}

\begin{lemma}\label{Thm3.1}
Let $A, B\in \mathbb{C}^{n\times n}$ and $C=A+B$. Denote $S_1=\Lambda(A)\cap \Lambda(C), S_2=\Lambda(C)\setminus S_1$, and $S_3=\Lambda(A)\setminus S_1$.
Then
\begin{equation}\label{3.1}
|\Lambda(C)|\leq (r(B)+1)|\Lambda(A)|+d(A)-d(C)-N(A,B,C),
\end{equation}
where
\begin{equation}\label{3.111}
N(A,B,C)=|S_3|+\sum\limits_{\lambda\in S_3}\big(r(B)-m_g(A,\lambda)\big)+\sum\limits_{\lambda\in S_2}I(C,\lambda)
\end{equation}
is a nonnegative number.
\end{lemma}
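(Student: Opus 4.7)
The plan is to reduce the desired bound, via two applications of Lemma~\ref{Lem3.1} (one for $A$ and one for $C$), to the standard rank inequality $|m_g(A,\lambda)-m_g(C,\lambda)|\le r(B)$ that comes from comparing the ranks of $A-\lambda I$, $C-\lambda I$, and $B$.

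First I would record the set-theoretic facts: the sets $S_1,S_2,S_3$ are pairwise disjoint with $\Lambda(A)=S_1\cup S_3$ and $\Lambda(C)=S_1\cup S_2$, so $|\Lambda(A)|=|S_1|+|S_3|$ and $|\Lambda(C)|=|S_1|+|S_2|$. Substituting these into \eqref{3.1} and using the elementary identity $\sum_{\lambda\in S_2}(1+I(C,\lambda))=\sum_{\lambda\in S_2}m_g(C,\lambda)$ coming from the definition of $I$, direct manipulation shows that \eqref{3.1} is algebraically equivalent to
\begin{equation*}
\sum_{\lambda\in S_2}m_g(C,\lambda)-\sum_{\lambda\in S_3}m_g(A,\lambda)+d(C)-d(A)\le r(B)\,|S_1|.
\end{equation*}

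Next I would invoke Lemma~\ref{Lem3.1} twice: once with $M=A$ and $S=S_1$ (whose complement in $\Lambda(A)$ is $S_3$), and once with $M=C$ and $S=S_1$ (whose complement in $\Lambda(C)$ is $S_2$). Subtracting the two resulting identities, the $n$ cancels and the left-hand side of the displayed inequality becomes exactly $\sum_{\lambda\in S_1}\bigl(m_g(A,\lambda)-m_g(C,\lambda)\bigr)$. The final ingredient is the rank-perturbation inequality: since $A-\lambda I=(C-\lambda I)-B$, we have $r(A-\lambda I)\le r(C-\lambda I)+r(B)$, and subtracting from $n$ yields $m_g(A,\lambda)-m_g(C,\lambda)\le r(B)$ for every $\lambda\in\mathbb{C}$. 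Summing this termwise over $S_1$ closes the argument.

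For the nonnegativity of $N(A,B,C)$, I would handle the three summands separately: $|S_3|\ge 0$ is immediate; for $\lambda\in S_2\subseteq\Lambda(C)$ one has $m_g(C,\lambda)\ge 1$ and hence $I(C,\lambda)\ge 0$; and for $\lambda\in S_3$ one has $\lambda\notin\Lambda(C)$, so by \eqref{2.3} $m_g(C,\lambda)=0$, and the symmetric rank inequality $m_g(A,\lambda)-m_g(C,\lambda)\le r(B)$ yields $r(B)-m_g(A,\lambda)\ge 0$.

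The main obstacle is not conceptual but organizational: one must carefully bookkeep which eigenvalue set each term tracks, and choose the sign conventions so that the two instances of Lemma~\ref{Lem3.1} combine into a single sum over $S_1$. Once that bridge is in place, the whole statement collapses onto the elementary rank inequality, which does all the real work.
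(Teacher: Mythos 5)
Your argument is correct and is essentially the paper's proof run in reverse: the paper chains $|\Lambda(C)|=|S_1|+|S_2|$ through Lemma~\ref{Lem3.1} (applied to $C$ with $S=S_2$ and to $A$ with $S=S_1$) and the bound $m_g(A,\lambda)-m_g(C,\lambda)\le r(B)$ summed over $S_1$, which is precisely the inequality you isolate as the ``algebraically equivalent'' form, and your nonnegativity argument for $N(A,B,C)$ matches the paper's. The one slip is in your justification of the rank bound: the decomposition $A-\lambda I=(C-\lambda I)-B$ yields $m_g(C,\lambda)-m_g(A,\lambda)\le r(B)$, whereas the direction you actually use follows from $C-\lambda I=(A-\lambda I)+B$; since $r(B)=r(-B)$ both directions hold, so this is cosmetic rather than a gap.
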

\begin{proof}
We note that
\begin{equation}
|\Lambda(C)|=|S_1|+|S_2|,  \label{3.2}
\end{equation}
\begin{equation}
|S_1|=|\Lambda(A)|-|S_3|,  \label{3.3}
\end{equation}
and it follows from \eqref{21} that
 \begin{equation}
|S_2|=\sum\limits_{\lambda\in S_2} \big(m_g(C,\lambda)-I(C,\lambda)\big).  \label{3.4}
\end{equation}
As $S_1=\Lambda(C)\backslash S_2$, we have from Lemma \ref{Lem3.1} that
\begin{eqnarray*}
\sum\limits_{\lambda\in S_2} m_g(C,\lambda)
  = n-\sum\limits_{\lambda\in S_1} m_g(C,\lambda)-d(C).
\end{eqnarray*}%
Moreover, we obtain from \cite[(1.7c)]{P. E. Farrell} that
\begin{equation}\label{3.7}
m_g(A,\lambda)-m_g(C,\lambda)\leq r(B).
\end{equation}
Thus,
\begin{eqnarray*} 
\sum\limits_{\lambda\in S_2} m_g(C,\lambda)
   &\leq &    n+\sum\limits_{\lambda\in S_1}\big(r(B)- m_g(A,\lambda)\big)-d(C) \nonumber\\
   &= &    r(B)|S_1|+n-\sum\limits_{\lambda\in S_1}m_g(A,\lambda)-d(C).
\end{eqnarray*}%

On the other hand, since $S_3=\Lambda(A)\backslash S_1$, we obtain from Lemma \ref{Lem3.1} that
\begin{equation*}
 n-\sum\limits_{\lambda\in S_1}m_g(A,\lambda)=d(A)+\sum\limits_{\lambda\in S_3}m_g(A,\lambda).
\end{equation*}%
Hence,
\begin{equation}\label{3.9}
\sum\limits_{\lambda\in S_2} m_g(C,\lambda)
   \leq   r(B)|S_1|+d(A)+\sum\limits_{\lambda\in S_3}m_g(A,\lambda)-d(C). \\
\end{equation}

Combining \eqref{3.2}--\eqref{3.9}, we arrive at
{\small\begin{eqnarray*}
|\Lambda(C)| &=& |S_1|+|S_2|=|S_1|+\sum\limits_{\lambda\in S_2} \big(m_g(C,\lambda)-I(C,\lambda)\big)\\
             &\leq&|S_1|+r(B)|S_1|+d(A)+\sum\limits_{\lambda\in S_3}m_g(A,\lambda)-d(C)-\sum\limits_{\lambda\in S_2} I(C,\lambda)\\
             &=&(r(B)+1)(|\Lambda(A)|-|S_3|)+d(A)+\sum\limits_{\lambda\in S_3}m_g(A,\lambda)-d(C)-\sum\limits_{\lambda\in S_2} I(C,\lambda)\\
             &=&(r(B)+1)|\Lambda(A)|-|S_3|-r(B)|S_3|+d(A)+\sum\limits_{\lambda\in S_3}m_g(A,\lambda)-d(C)-\sum\limits_{\lambda\in S_2} I(C,\lambda)\\
             &=&(r(B)+1)|\Lambda(A)|+d(A)-d(C)-|S_3|-\sum\limits_{\lambda\in S_3}\big(r(B)-m_g(A,\lambda)\big)-\sum\limits_{\lambda\in S_2} I(C,\lambda)\\
             &=&(r(B)+1)|\Lambda(A)|+d(A)-d(C)-N(A,B,C).
\end{eqnarray*}}%
Furthermore, we note from \eqref{3.7} and \eqref{2.3} that $m_g (C,\lambda)=0,~\forall\lambda\in S_3$. Thus,
\begin{eqnarray*}
 r(B)-m_g(A,\lambda)\geq 0, \ \ \ \  \forall \lambda\in S_3,
\end{eqnarray*}%
from which we get $N(A,B,C)\geq 0$.
\end{proof}

\begin{remark}
Notice that all the three terms in $N(A,B,C)$ are greater than or equal to zero. Therefore, the new bound \eqref{3.1} is better than those given in \eqref{1.1} and \eqref{1.2}.
\end{remark}

\begin{example}
We want to show the sharpness of \eqref{3.1}. 
Consider the matrices
$$
A=\left[
      \begin{array}{ccccc}
        1 & 0 & 0 & 0 & 0 \\
        0 & 0 & 0 & 0 & 0 \\
        0 & 0 & 1 & 0 & 0\\
        0 & 0 & 0 & 0 & 0\\
        0 & 0 & 0 & 0 & 2\\
      \end{array}
    \right],\quad
    B=\left[
      \begin{array}{ccccc}
        1 & 1 & 0 & 0  & 0 \\
        1 & 1 & 0 & 0  & 0 \\
        0 & 0 & 1 & 1  & 0 \\
        0 & 0 & 1 & 1  & 0 \\
        0 & 0 & 0 & 0  & 1 \\
      \end{array}
    \right],
    $$
and
 \begin{eqnarray*}
 C=A+B=\left[
      \begin{array}{ccccc}
        2 & 1 & 0 & 0 & 0\\
        1 & 1 & 0 & 0 & 0\\
        0 & 0 & 2 & 1 & 0\\
        0 & 0 & 1 & 1 & 0\\
        0 & 0 & 0 & 0 & 3\\
      \end{array}
    \right].
    \end{eqnarray*}%
It is straightforward to show that $d(A)=d(C)=0,~\Lambda(A)=\{0,1,2\},~\Lambda(C)=\{\frac{3-\sqrt{5}}{2},~\frac{3+\sqrt{5}}{2},~3\}$, and $r(B)=3$. Thus, $|\Lambda(A)|=|\Lambda(C)|=3,~S_1=\emptyset,~S_2=\Lambda(C),~S_3=\Lambda(A)$, and
$m_g(A,0)=m_g(A,1)=2,~m_g(A,2)=1,~I(C,\frac{3-\sqrt{5}}{2})=I(C,\frac{3+\sqrt{5}}{2})=1,~I(C,3)=0$. So we have
$$
|S_3|=3,\quad \sum\limits_{\lambda\in S_3}\big(r(B)-m_g(A,\lambda)\big)=(3-2)+(3-2)+(3-1)=4,\quad \sum\limits_{\lambda\in S_2} I(C,\lambda)=2,
$$
and
$$
N(A,B,C)=9.
$$
For this example, both \eqref{1.1} and \eqref{1.2} yield
$$
|\Lambda(C)|\leq \big(r(B)+1\big)|\Lambda(A)|=12,
$$
which is invalid because the order of $C$ is 5. As a comparison, \eqref{3.1} gives
\begin{eqnarray*}
|\Lambda(C)|\leq\big(r(B)+1\big)|\Lambda(A)|-N(A,B,C)=12-9=3.
    \end{eqnarray*}%
As $|\Lambda(C)|=3$, \eqref{3.1} is sharp.
\end{example}

However, the bound given in Lemma \ref{Thm3.1} is too complicated to use in practice. Indeed, it involves $m_g(A,\lambda)$ for each $\lambda\in S_3$ and $I(C,\lambda)$ for each $\lambda\in S_2$. We want to provide a more practical bound, and the idea is using $|S_3|=|\Lambda(A)\backslash \Lambda(C)|$ to take the place of $N(A,B,C)$ in \eqref{3.1}.
We can present the first main theorem in this paper.
\begin{theorem}\label{Cor2.4}
Let $A, B\in \mathbb{C}^{n\times n}$ and $C=A+B$.
Then
\begin{equation}\label{3.11}
|\Lambda(C)|\leq \big(r(B)+1\big)|\Lambda(A)|+d(A)-d(C)-|\Lambda(A)\backslash \Lambda(C)|.
\end{equation}
Specifically, if the spectra of $A$ and $C$ are disjoint, then we have
\begin{equation}\label{3.12}
|\Lambda(C)|\leq r(B)|\Lambda(A)|+d(A)-d(C).
\end{equation}
\end{theorem}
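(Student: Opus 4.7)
The plan is to derive Theorem \ref{Cor2.4} as a direct corollary of Lemma \ref{Thm3.1} by bounding the quantity $N(A,B,C)$ from below. Recall that Lemma \ref{Thm3.1} established
$$
|\Lambda(C)| \le (r(B)+1)|\Lambda(A)| + d(A) - d(C) - N(A,B,C),
$$
with
$$
N(A,B,C) = |S_3| + \sum_{\lambda \in S_3}\bigl(r(B) - m_g(A,\lambda)\bigr) + \sum_{\lambda \in S_2} I(C,\lambda).
$$
So the whole task is to argue that the last two summations are nonnegative; then the inequality \eqref{3.11} follows by replacing $N(A,B,C)$ with $|S_3|$.

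For the first of the two summations, I would quote the same reasoning already used at the end of the proof of Lemma \ref{Thm3.1}: for $\lambda \in S_3 = \Lambda(A)\setminus\Lambda(C)$, the bound \eqref{3.7} together with \eqref{2.3} forces $m_g(A,\lambda) \le r(B)$, so each summand $r(B) - m_g(A,\lambda)$ is $\ge 0$. For the second summation, the definition $I(C,\lambda) = m_g(C,\lambda) - 1$ gives $I(C,\lambda)\ge 0$ for every $\lambda \in \Lambda(C)$, hence in particular for $\lambda\in S_2\subseteq \Lambda(C)$. Combining these two observations yields $N(A,B,C) \ge |S_3| = |\Lambda(A)\setminus\Lambda(C)|$, which when plugged into \eqref{3.1} proves \eqref{3.11}.

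The second statement, \eqref{3.12}, is an immediate specialisation. If $\Lambda(A)\cap\Lambda(C) = \emptyset$, then $S_1 = \emptyset$, so by \eqref{3.3} we have $|S_3| = |\Lambda(A)|$, i.e.\ $|\Lambda(A)\setminus\Lambda(C)| = |\Lambda(A)|$. Substituting this into \eqref{3.11} cancels one copy of $|\Lambda(A)|$ from the factor $r(B)+1$, leaving
$$
|\Lambda(C)| \le r(B)|\Lambda(A)| + d(A) - d(C),
$$
which is exactly \eqref{3.12}.

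There is really no obstacle here; the substantive work was already done inside Lemma \ref{Thm3.1}. The only mild subtlety is recognising that the nonnegativity argument for $N(A,B,C)$ actually supplies a sharper lower bound, namely $|S_3|$, than the trivial bound $0$ used in Lemma \ref{Thm3.1}. That is the single idea driving the refinement from \eqref{1.2} to \eqref{3.11}, and it is precisely what makes \eqref{3.11} an \emph{a priori} bound (it depends only on $A$, $B$, and on whether each eigenvalue of $A$ survives in $\Lambda(C)$), in contrast to the more intricate bound \eqref{3.1}, which requires detailed knowledge of $m_g(A,\lambda)$ on $S_3$ and $I(C,\lambda)$ on $S_2$.
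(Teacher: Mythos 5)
Your proposal is correct and follows essentially the same route as the paper: both deduce \eqref{3.11} from Lemma \ref{Thm3.1} by observing that the two summations in $N(A,B,C)$ are nonnegative (so $N(A,B,C)\ge |S_3|=|\Lambda(A)\backslash\Lambda(C)|$), and both obtain \eqref{3.12} by substituting $|\Lambda(A)\backslash\Lambda(C)|=|\Lambda(A)|$ when the spectra are disjoint. Your explicit justification of the two nonnegativity claims merely repeats what the paper already established inside the proof of Lemma \ref{Thm3.1}, so there is no substantive difference.
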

\begin{proof}
The inequality \eqref{3.11} follows directly from the fact that all the three terms in \eqref{3.111} are greater than or equal to zero.
For \eqref{3.12}, we have
$|\Lambda(A)\backslash \Lambda(C)|=|\Lambda(A)|$ if the spectra of $A$ and $C$ are disjoint. By \eqref{3.11},
\begin{eqnarray*}
|\Lambda(C)| &\leq & (r(B)+1)|\Lambda(A)|+d(A)-d(C)-|\Lambda(A)\backslash \Lambda(C)|\\
            &= & (r(B)+1)|\Lambda(A)|+d(A)-d(C)-|\Lambda(A)|\\
            &=& r(B)|\Lambda(A)|+d(A)-d(C).
\end{eqnarray*}
\end{proof}
\begin{remark}
Theorem \ref{Cor2.4} indicates that the upper bounds given in \eqref{1.1} and \eqref{1.2}
can be enhanced substantially in many case. The key idea is to first separate $\Lambda(A)\cup\Lambda(C)$ into three disjoint sets, and then pay special attention to the set $\Lambda(A)\setminus\Lambda(C)$.
Obviously, our new bounds are better than those given in \eqref{1.1} and \eqref{1.2}.
Note that $|S_3|>0$ {\it if and only if} there is at least an element in $\Lambda(A)$ but not in $\Lambda(C)$.
Specifically, $|S_3|=|\Lambda(A)|$ as the eigenvalues of $A$ and $C$ are disjoint. 
\end{remark}

\begin{example}
In \eqref{3.12}, it requires that the eigenvalues of $A$ and those of its low-rank update $C$, are disjoint with each other. In this example, we try illustrate that this condition is not stringent and even trivial in practice, at least for randomly generated matrices $A$ and $B$:

Given $n$, the size of the matrix $A$, $k$, the rank of the perturbation matrix $B$, and $m$, the number of tests, we run the following MATLAB code. Both the test matrices $A$ and the low-rank matrices $B$ are generated by using the MATLAB command {\tt randn}, where {\tt randn(M,N)} returns an M-by-N matrix containing pseudorandom values drawn
from the standard normal distribution \cite{MATLAB}.
{\it
\begin{enumerate}
  \item  ~ function num=dinstincteigen(n,k,m)
  \item  ~{\bf for } i=1:m ~~\% Run the test for $m$ times
  \item  ~~~~~ A=randn(n,n);
  \item  ~~~~~ D1=unique(eig(A));~~\% $\Lambda(A)$
  \item  ~~~~~ B=randn(n,k)*randn(k,n); ~~\% Generate a random rank-k matrix
  \item  ~~~~~ C=A+B;
  \item  ~~~~~ D2=unique(eig(C));~~\% $\Lambda(C)$
  \item  ~~~~~ D=setdiff(D1,D2);~~\%  $\Lambda(A)\backslash \Lambda(C)$
  \item  ~~~~~ num=size(D,1);~~ \% $S_3=|\Lambda(A)\backslash \Lambda(C)|$
  \item  ~{\bf end }
\end{enumerate}
}
We run this code with $n=1000$, $k=1,2,\ldots,5$, and $m=1000$. In other words, we run the test for $mk=5000$ times altogether.
The numerical results show that $num=1000$ for all the $5000$ runs. This implies that we often have $\Lambda(A)\cap\Lambda(C)=\emptyset$, at least for randomly generated matrices $A$ and $B$.
Therefore, the condition $|\Lambda(A)\backslash \Lambda(C)|>0$ is trivial and $|\Lambda(A)\backslash \Lambda(C)|=|\Lambda(A)|$ is not stringent in practice.
%
\end{example}


Next, we show the sharpness of \eqref{3.11} and \eqref{3.12}, and demonstrate the superiority of them over \eqref{1.1} and \eqref{1.2}.
\begin{example}
Consider
$$
A=\left[
      \begin{array}{ccc}
        0 & 0 & -1 \\
        -1 & 0  & -1  \\
        -1 & -1  &  1  \\
      \end{array}
    \right],\quad
    B=\left[
      \begin{array}{ccc}
        1 & 1 & 1 \\
         1 & 1 & 1 \\
          1 & 1 & 1 \\
      \end{array}
    \right],
    $$
and
 \begin{eqnarray*}
 C=A+B=\left[
      \begin{array}{ccc}
        1 & 1 & 0 \\
        0 & 1 & 0 \\
        0 & 0 & 2 \\
      \end{array}
    \right].
    \end{eqnarray*}%
We have $\Lambda(A)=\{-1.2470, 0.4450,1.8019\},~d(A)=0,~ \Lambda(C)=\{1,2\},~ d(C)=1,~ r(B)=1$, and $|S_3|=|\Lambda(A)|=3$. For this example, \eqref{1.1} and \eqref{1.2} give us
$$
|\Lambda(C)|\leq \big(r(B)+1\big)|\Lambda(A)|+d(A)=6,
$$
and
$$
|\Lambda(C)|\leq \big(r(B)+1\big)|\Lambda(A)|+d(A)-d(C)=5,
$$
respectively. Obviously, the above two bounds are invalid since the matrix $C$ is of order 3. As a comparison, both \eqref{3.11} and \eqref{3.12} yield
\begin{eqnarray*}
|\Lambda(C)|\leq r(B)|\Lambda(A)|+d(A)-d(C)=3-1=2,
    \end{eqnarray*}%
which is sharp for the sake of $|\Lambda(C)|=2$.
\end{example}

However, all the results provided in Theorem \ref{Thm2.2}, Lemma \ref{Thm3.1} and Theorem \ref{Cor2.4}
need to know the eigen-information on the perturbed matrix $C$, which is not available {\it a prior}.
Thus, we aim to provide some {\it prior} upper bounds that only rely on the information of $A$ and $B$. Similar to \cite{P. E. Farrell}, we assume that the information on $|\Lambda(A)|$ is known in advance. We are in a position to give the second main theorem in this paper.
\begin{theorem}\label{Cor2.6} Suppose that $A \in \mathbb{C}^{n\times n}$ is diagonalizable and
$C=A+B$. Then
{\rm (i)} If $C$ is diagonalizable and $|\Lambda(A)\backslash \Lambda(C)|\geq 1$, we have
\begin{equation}\label{3.15}
|\Lambda(C)|\leq (r(B)+1)|\Lambda(A)|-1.
\end{equation}
Specifically, if $\Lambda(A)\cap\Lambda(C)=\emptyset$, then
\begin{equation}\label{41}
|\Lambda(C)|\leq r(B)|\Lambda(A)|.
\end{equation}
{\rm (ii)} If $C$ is non-diagonalizable and $|\Lambda(A)\backslash \Lambda(C)|\geq 1$, then
\begin{equation}\label{3.13}
|\Lambda(C)|\leq (r(B)+1)|\Lambda(A)|-2.
\end{equation}
Specifically, if $\Lambda(A)\cap\Lambda(C)=\emptyset$, then
\begin{equation}\label{3.14}
|\Lambda(C)|\leq r(B)|\Lambda(A)|-1.
\end{equation}
\end{theorem}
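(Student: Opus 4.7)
The plan is to derive Theorem \ref{Cor2.6} as a direct corollary of Theorem \ref{Cor2.4}, feeding in the structural assumptions on the defectivities of $A$ and $C$. Specifically, I will apply the master inequality
\begin{equation*}
|\Lambda(C)|\leq (r(B)+1)|\Lambda(A)|+d(A)-d(C)-|\Lambda(A)\backslash \Lambda(C)|
\end{equation*}
and then simply evaluate or bound each of the last three correction terms under each hypothesis.

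First I would record the two elementary facts that drive every case: since $A$ is diagonalizable, $d(A)=0$; and by the very definition of defectivity, $d(C)=0$ when $C$ is diagonalizable while $d(C)\geq 1$ when $C$ is not. These are the only facts about $d(A),d(C)$ I will need.

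For part (i), $C$ is diagonalizable, so $d(C)=0$, and by hypothesis $|\Lambda(A)\backslash \Lambda(C)|\geq 1$; substituting into \eqref{3.11} immediately yields \eqref{3.15}. The special case $\Lambda(A)\cap\Lambda(C)=\emptyset$ forces $|\Lambda(A)\backslash\Lambda(C)|=|\Lambda(A)|$, and then \eqref{3.11} collapses to
\begin{equation*}
|\Lambda(C)|\leq (r(B)+1)|\Lambda(A)|-|\Lambda(A)|=r(B)|\Lambda(A)|,
\end{equation*}
which is \eqref{41}. For part (ii), the only new ingredient is that $C$ is non-diagonalizable, hence $d(C)\geq 1$; together with $|\Lambda(A)\backslash \Lambda(C)|\geq 1$ and $d(A)=0$, the inequality \eqref{3.11} gives $|\Lambda(C)|\leq (r(B)+1)|\Lambda(A)|-1-1=(r(B)+1)|\Lambda(A)|-2$, proving \eqref{3.13}. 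The disjoint-spectra specialization once again replaces $|\Lambda(A)\backslash \Lambda(C)|$ by $|\Lambda(A)|$ to produce \eqref{3.14}.

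There is essentially no obstacle in this argument: all the heavy lifting (in particular, controlling $\sum_{\lambda\in S_2}I(C,\lambda)$ and $\sum_{\lambda\in S_3}(r(B)-m_g(A,\lambda))$) has already been absorbed into Theorem \ref{Cor2.4}. The only point requiring a modicum of care is the bookkeeping in the sign and magnitude of the $d(A)-d(C)-|\Lambda(A)\backslash\Lambda(C)|$ correction in each of the four sub-cases, and the remark that in Case (ii) we are genuinely exploiting $d(C)\geq 1$ even though we have no further information about the size of $d(C)$. This observation is what allows the improvement of the bound by an extra unit compared with Case (i), thereby recovering and sharpening \eqref{1.3} as an \emph{a priori} bound that depends solely on $|\Lambda(A)|$, $r(B)$, and the qualitative diagonalizability of $C$.
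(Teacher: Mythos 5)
Your proposal is correct and follows essentially the same route as the paper: both derive all four bounds by substituting $d(A)=0$, the dichotomy $d(C)=0$ versus $d(C)\geq 1$, and the lower bound on $|\Lambda(A)\backslash\Lambda(C)|$ into the inequality of Theorem \ref{Cor2.4}. No gaps.
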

\begin{proof}
Recall that $d(A)=d(C)=0$ if $A$ and $C$ are diagonalizable. So \eqref{3.15} follows directly from \eqref{3.11}.
Specifically, if $\Lambda(A)\cap\Lambda(C)=\emptyset$, we have $|\Lambda(A)\backslash \Lambda(C)|=|\Lambda(A)|$, and thus \eqref{41} holds.

On the other hand, if $A$ is diagonalizable while $C$ is non-diagonalizable, we have $d(A)=0$ and $d(C)\geq 1$. From \eqref{3.11}
and the fact that $|\Lambda(A)\backslash \Lambda(C)|\geq 1$, we get
\begin{eqnarray*}
|\Lambda(C)| \leq  (r(B)+1)|\Lambda(A)|-1-1
            =(r(B)+1)|\Lambda(A)|-2.
\end{eqnarray*}
Specifically, if $C$ is non-diagonalizable and $\Lambda(A)\cap\Lambda(C)=\emptyset$, we obtain \eqref{3.14} from \eqref{3.12}.
\end{proof}
\begin{remark}
Theorem \ref{Cor2.6} is an analogy of Theorem \ref{Thm1.3}, which plays an important role in the estimate for the number of Krylov iterations after low-rank update.
Compared with Theorem \ref{Thm1.3}, our new results are tighter and improve those given in \cite[Corollary 4.2]{X.F. Xu} significantly: First, when $C$ is diagonalizable, $r(B)=1$, and $|\Lambda(A)\backslash \Lambda(C)|\geq 1$ {\rm(}which is trivial in practice{\rm)}, it follows from \eqref{3.15} that
$$
|\Lambda(C)|\leq 2|\Lambda(A)|-1< 2|\Lambda(A)|.
$$
Second, if $C$ is non-diagonalizable, $r(B)=1$ and $\Lambda(A)\cap\Lambda(C)=\emptyset$ {\rm(}which is not stringent in practice{\rm)}, we have
$$
|\Lambda(C)|\leq |\Lambda(A)|-1,
$$
which is much smaller than
$2|\Lambda(A)|-1$; see \eqref{1.3}.

On the other hand, the upper bound given in Theorem \ref{Thm211} involves $d(A)$, i.e., the sum of the  defectivities of all the eigenvalues of $A$, which is difficult to evaluate. In practice, we can use 
$
(r(B)+1)|\Lambda(A)|-1
$ 
as an upper bound to $|\Lambda(C)|$, if we only have $|\Lambda(A)|$ and $r(B)$ at hand and there is no other information available. Another reason is that it is the upper bound of \eqref{3.15}--\eqref{3.14}, and it is much better than the right-hand side of \eqref{1.1}.
\end{remark}

\begin{example}
In this example, we try to show the sharpness of \eqref{3.15}--\eqref{3.14}. 

{\rm (i)}~~
Consider the two $n$-by-$n$ matrices
$$
A=\left[
      \begin{array}{ccccc}
        0 & 0 & 0 & \cdots & 0 \\
        0 & 1 & 0 & \cdots & 0 \\
        0 & 0 & 1 & \cdots & 0 \\
        \vdots & \vdots & \vdots & \ddots & \vdots \\
       0 & 0 & 0 & \cdots & 1 \\
      \end{array}
    \right],\quad
    B=\left[
      \begin{array}{ccccc}
        1 & 1 & 0 & \cdots & 0 \\
        1 & 1 & 0 & \cdots & 0 \\
        0 & 0 & 0 & \cdots & 0 \\
        \vdots & \vdots & \vdots & \ddots & \vdots \\
       0 & 0 & 0 & \cdots & 0 \\
      \end{array}
    \right].
    $$
Then
 \begin{eqnarray*}
 C=A+B=\left[
      \begin{array}{ccccc}
        1 & 1 & 0 & \cdots & 0 \\
        1 & 2 & 0 & \cdots & 0 \\
        0 & 0 & 1 & \cdots & 0 \\
        \vdots & \vdots & \vdots & \ddots & \vdots \\
       0 & 0 & 0 & \cdots & 1 \\
      \end{array}
    \right].
    \end{eqnarray*}%
 Note that both $A$ and $C$ are diagonalizable since they are symmetric.  Moreover, $\Lambda(A)=\{0,1\},~\Lambda(C)=\{1,~\frac{3-\sqrt{5}}{2},~\frac{3+\sqrt{5}}{2}\}$, and $r(B)=1$. Thus, $|\Lambda(A)|=2,~|\Lambda(C)|=3$, and $|\Lambda(A)\backslash \Lambda(C)|=1$, and \eqref{3.15} gives
\begin{eqnarray*}
|\Lambda(C)|\leq\big(r(B)+1\big)|\Lambda(A)|-1=4-1=3.
    \end{eqnarray*}%
We see \eqref{3.15} is sharp because of $|\Lambda(C)|=3$.
%

{\rm (ii)}~~Consider
$$
A=\left[
      \begin{array}{ccc}
        1 & 0 & 0 \\
        0 & 2  & 0  \\
        0 & 0  &  3 \\
      \end{array}
    \right],\quad
    B=\left[
      \begin{array}{ccc}
        1 & 1 & 1 \\
         1 & 1 & 1 \\
          1 & 1 & 1 \\
      \end{array}
    \right],
    $$
and
 \begin{eqnarray*}
 C=A+B=\left[
      \begin{array}{ccc}
        2 & 1 & 1 \\
        1 & 3 & 1 \\
        1 & 1 & 4 \\
      \end{array}
    \right].
    \end{eqnarray*}%
In this example, both $A$ and $C$ are diagonalizable, moreover, $\Lambda(A)=\{1, 2, 3\},~r(B)=1$, $\Lambda(C)=\{1.3249,  2.4608, 5.2143\}$, and $\Lambda(A)\cap\Lambda(C)=\emptyset$. We have from \eqref{41} that
\begin{eqnarray*}
|\Lambda(C)|\leq r(B)|\Lambda(A)|=3,
\end{eqnarray*}%
which is sharp for the sake of $|\Lambda(C)|=3$.
%

{\rm (iii)}~~
We try to show the sharpness of \eqref{3.13}. 
Consider the two  n-by-n matrices
$$
A=\left[
      \begin{array}{ccccc}
        1 & 0 & 0 & \cdots & 0 \\
        0 & 2 & 0 & \cdots & 0 \\
        0 & 0 & 2 & \cdots & 0 \\
        \vdots & \vdots & \vdots & \ddots & \vdots \\
       0 & 0 & 0 & \cdots & 2 \\
      \end{array}
    \right],\quad
    B=\left[
      \begin{array}{ccccc}
        2 & 0 & 1 & \cdots & 0 \\
        2 & 0 & 1 & \cdots & 0 \\
        0 & 0 & 0 & \cdots & 0 \\
        \vdots & \vdots & \vdots & \ddots & \vdots \\
       0 & 0 & 0 & \cdots & 0 \\
      \end{array}
    \right].
    $$
Then
 \begin{eqnarray*}
 C=A+B=\left[
      \begin{array}{ccccc}
        3 & 0 & 1 & \cdots & 0 \\
        2 & 2 & 1 & \cdots & 0 \\
        0 & 0 & 2 & \cdots & 0 \\
        \vdots & \vdots & \vdots & \ddots & \vdots \\
       0 & 0 & 0 & \cdots & 2 \\
      \end{array}
    \right].
    \end{eqnarray*}%
  Obviously, $A$ is diagonalizable, $\Lambda(A)=\{1,2\},~ r(B)=1, ~\Lambda(C)=\{2, 3\}$ and $m_a(C,2)=n-1$. Thus, $|\Lambda(A)|=2,~|\Lambda(C)|=2$, and $|\Lambda(A)\backslash \Lambda(C)|=1$. It turns out that $C$ is non-diagonalizable. Indeed,
 \begin{eqnarray*}
 C-2I=\left[
      \begin{array}{ccccc}
        1 & 0 & 1 & \cdots & 0 \\
        2 & 0 & 1 & \cdots & 0 \\
        0 & 0 & 0 & \cdots & 0 \\
        \vdots & \vdots & \vdots & \ddots & \vdots \\
       0 & 0 & 0 & \cdots & 0 \\
      \end{array}
    \right],
    \end{eqnarray*}%
    where $I$ is the identity matrix.
Therefore, $r(C-2I)=2$, implying that $m_g(C,2)=n-2< m_a(C,2)=n-1$.
It follows from \eqref{3.13} that
\begin{eqnarray*}
|\Lambda(C)|\leq\big(r(B)+1\big)|\Lambda(A)|-2=4-2=2.
    \end{eqnarray*}%
As $|\Lambda(C)|=2$, \eqref{3.13} is sharp.
%

{\rm (iv)}~~
Consider
$$
A=\left[
      \begin{array}{ccc}
        1 & -1 & 1 \\
        -1 & 0  & 1  \\
        -1 & -2  &  1 \\
      \end{array}
    \right],\quad
    B=\left[
      \begin{array}{ccc}
        1 & 2 & -1 \\
         1 & 2 & -1 \\
          1 & 2 & -1 \\
      \end{array}
    \right],
    $$
and
 \begin{eqnarray*}
 C=A+B=\left[
      \begin{array}{ccc}
        2 & 1 & 0 \\
        0 & 2 & 0 \\
        0 & 0 & 0 \\
      \end{array}
    \right].
    \end{eqnarray*}%
It is seen that $A$ is diagonalizable while $C$ is not. We have $\Lambda(A)=\{1.6506, 0.1747 + 1.5469{\bf i}, 0.1747 - 1.5469{\bf i}\}$, $r(B)=1$, and $\Lambda(A)\cap\Lambda(C)=\emptyset$, where ${\bf i}$ is the imaginary unit. For this example,  \eqref{3.14} gives 
\begin{eqnarray*}
|\Lambda(C)|\leq r(B)|\Lambda(A)|-1=2.
    \end{eqnarray*}%
Note that $|\Lambda(C)|=2$,  \eqref{3.14} is sharp.
\end{example}


The singular values of low-rank updated matrices are of great interest in many applications \cite{Chu}.
Finally, we give the following result on the number of distinct singular values of a matrix after perturbation.
\begin{corollary}
Let $A, B\in \mathbb{C}^{n\times n}$ and $C=A+B.$ Then
\begin{equation}\label{4.1}
|\sigma(C)|\leq (2r(B)+1)|\sigma(A)|-|\sigma(A)\backslash \sigma(C)|.
\end{equation}
where $\sigma(\cdot)$ denotes the set of distinct singular values of a matrix, and $|\sigma(\cdot)|$ denotes the cardinality of the set $\sigma(\cdot)$ .
\end{corollary}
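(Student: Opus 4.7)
The plan is to reduce the singular value bound to the eigenvalue bound already established in Theorem \ref{Cor2.4}, by passing to the Hermitian Gram matrices $A^{*}A$ and $C^{*}C$. Since every singular value is nonnegative, the map $\sigma\mapsto\sigma^{2}$ is a bijection from $\sigma(M)$ onto $\Lambda(M^{*}M)$ for any $M\in\mathbb{C}^{n\times n}$; in particular $|\sigma(M)|=|\Lambda(M^{*}M)|$, and the same bijection gives $|\sigma(A)\setminus\sigma(C)|=|\Lambda(A^{*}A)\setminus\Lambda(C^{*}C)|$. Hence it suffices to bound $|\Lambda(C^{*}C)|$ in terms of $|\Lambda(A^{*}A)|$ and $|\Lambda(A^{*}A)\setminus\Lambda(C^{*}C)|$.

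Next I would expand $C^{*}C=(A+B)^{*}(A+B)=A^{*}A+E$ and control the rank of the induced ``perturbation'' $E=A^{*}B+B^{*}A+B^{*}B$ carefully. Regrouping as $E=A^{*}B+B^{*}(A+B)=A^{*}B+B^{*}C$ writes $E$ as a sum of two matrices each of rank at most $r(B)$, so subadditivity of rank gives $r(E)\le 2\,r(B)$. The naive three-term split only yields $r(E)\le 3r(B)$ and would produce a weaker constant in the final bound, so this regrouping is the one nonroutine step.

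Because $A^{*}A$ and $C^{*}C$ are Hermitian, they are diagonalizable, so $d(A^{*}A)=d(C^{*}C)=0$. Applying Theorem \ref{Cor2.4} to the decomposition $C^{*}C=A^{*}A+E$ then yields
$$|\Lambda(C^{*}C)|\le\bigl(r(E)+1\bigr)|\Lambda(A^{*}A)|-|\Lambda(A^{*}A)\setminus\Lambda(C^{*}C)|\le\bigl(2r(B)+1\bigr)|\Lambda(A^{*}A)|-|\Lambda(A^{*}A)\setminus\Lambda(C^{*}C)|,$$
and translating back through the squaring bijection produces exactly \eqref{4.1}. There is no substantive obstacle beyond the rank bookkeeping for $E$; the Hermiticity of the Gram matrices kills the defectivity terms for free, and the bijection between singular values and eigenvalues of $M^{*}M$ makes the set-theoretic translation immediate.
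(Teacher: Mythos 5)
Your proposal is correct and follows essentially the same route as the paper: expand $C^{*}C=A^{*}A+A^{*}B+B^{*}(A+B)$, bound the rank of the perturbation by $2r(B)$ via exactly that regrouping, use Hermiticity to kill the defectivity terms, and apply Theorem \ref{Cor2.4}. The only difference is that you spell out the squaring bijection between $\sigma(M)$ and $\Lambda(M^{*}M)$, which the paper leaves implicit.
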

\begin{proof}
Notice that
\begin{eqnarray*}
C^*C=(A+B)^*(A+B)=A^*A+A^*B+B^*(A+B),
\end{eqnarray*}%
where $C^*$ represents the conjugate transpose of $C$.
From $r\big(A^*B+B^*(A+B)\big)\leq 2r(B)$, $d(A^*A)=d(C^*C)=0$ and \eqref{3.11}, we obtain
\begin{eqnarray*}
|\Lambda(C^*C)| &\leq&  \big(r\big(A^*B+B^*(A+B)\big)+1\big)|\Lambda(A^*A)|-|\Lambda(A^*A)\backslash \Lambda(C^*C)|\\
               &\leq&  \big(2r(B)+1\big)|\Lambda(A^*A)|-|\Lambda(A^*A)\backslash \Lambda(C^*C)|.
\end{eqnarray*}%
Or equivalently,
\begin{equation*}
|\sigma(C)|\leq \big(2r(B)+1\big)|\sigma(A)|-|\sigma(A)\backslash \sigma(C)|.
\end{equation*}
\end{proof}


\begin{example}
In this example, we demonstrate the sharpness of \eqref{4.1}.
Consider
$$
A=\left[
      \begin{array}{ccccc}
        1 & 0 & 0 & 0 & 0 \\
         0 & 1 & 0 & 0 & 0 \\
          0 & 0 & 1 & 0 & 0 \\
           0 & 0 & 0 & 1 & 0 \\
            0 & 0 & 0 & 0 & 1 \\
      \end{array}
    \right],\quad
    B=\left[
      \begin{array}{ccccc}
        0 & 0 & 0 & 0 & 1 \\
         0 & 0 & 0 & 0 & 1 \\
          0 & 0 & 0 & 0 & 1 \\
           0 & 0 & 0 & 0 & 1 \\
            0 & 0 & 0 & 0 & 1 \\
      \end{array}
    \right].
    $$
It is easy to check that
 \begin{eqnarray*}
 C=A+B=\left[
      \begin{array}{ccccc}
        1 & 0 & 0 & 0 & 1 \\
         0 & 1 & 0 & 0 & 1 \\
          0 & 0 & 1 & 0 & 1 \\
           0 & 0 & 0 & 1 & 1 \\
            0 & 0 & 0 & 0 & 2 \\
      \end{array}
    \right],
    \end{eqnarray*}%
$\sigma(C)=\{1, 2.9208, 0.6847\}$ and $|\sigma(A)\backslash \sigma(C)|=0.$  We have from \eqref{4.1} that
\begin{eqnarray*}
|\sigma(C)|\leq\big(2r(B)+1\big)|\sigma(A)|-|\sigma(A)\backslash \sigma(C)|=3-0=3.
    \end{eqnarray*}%
Recall that $|\sigma(C)|=3$, so \eqref{4.1} is sharp.
\end{example}

\bigskip

\section*{Acknowledgments}
We would like to thank  Dr. Hongkui Pang for careful reading the manuscript and helpful discussions.

\newpage


\end{document}